\newtheorem{definition}[subsection]{Definition}
\newtheorem{lemma}[subsection]{Lemma}
\newcommand{\chara}{{\textup{char}}}
\newcommand{\Spec}{{\textup{Spec}}}
\newcommand{\Z}{{\mathbb{Z}}}
\renewcommand{\P}{{\mathbb{P}}}
\newcommand{\Q}{{\mathbb{Q}}}
\title{All subterminal schemes}
\author{Jaime Benabent Guerrero}
\date{\today}
\begin{document}
	
	\begin{abstract}
        We classify all subterminal schemes by characterizing their point structure, stalks, and topologies.
        This extends our previous classification of subterminal affine schemes, which correspond to spectra of solid rings.
	\end{abstract}
	
	\maketitle
	
	\section{Introduction}
    \noindent

    An object in a category is \emph{solid} if it admits at most one morphism into any other object. 
    In the category of commutative rings, solid rings were first studied in \cite{BK71}, and a complete classification was given in \cite{BG25}.
    When translated to the opposite category, the category of affine schemes, this corresponds precisely to subterminal objects, those objects that admit at most one morphism from any other object.

    \begin{definition}
        An object $X$ in a category is subterminal if, for any object $A$, there is at most one morphism $A \to X$.
    \end{definition}

    Thus, the classification of solid rings immediately yields a classification of subterminal objects among affine schemes: they are precisely the spectra of solid rings.
    The natural next question is whether additional subterminal objects exist in the broader category of all schemes.
    The goal of this paper is to answer this question by providing a complete classification of subterminal schemes.

    Our approach is as follows:
    \begin{itemize}
        \item
        We first recall the classification of solid rings and show that their spectra remain subterminal within the category of schemes.
        
        \item
        We then analyze the structure of these spectra, focusing on their topological properties and local rings.

        \item 
        Using this understanding, we determine whether other subterminal schemes exist and classify all possible such schemes.  
        To do so, we first analyze their points and stalks, then construct a compatible topology by covering them with spectra of solid rings. 
    \end{itemize}
    
    To recall the classification of solid rings, we use the tower of inclusions perspective introduced in \cite{BG25} for solid rings of the fourth type.
    The four types of solid rings are:

    \begin{enumerate}
		\item Cyclic rings $\Z/n\Z$, where $n \geq 1$ is a positive integer.
			
		\item Subrings of the rationals $\Z[J^{-1}]$, where $J$ is any set of primes.
			
		\item Product rings $\Z[J^{-1}] \times \Z/n\Z$, where $J$ is any set of primes that contains all prime factors of $n \geq 2$.

        \item Colimits of diagrams of the form
        \[
        \begin{tikzcd}
        {\Z[J_0^{-1}]} \arrow[r, hook] & \cdots \arrow[r, hook] & {\Z[J_n^{-1}] \times \prod\limits_{p \in K_n} \Z/p^{e_p}\Z} \arrow[r, hook] & \cdots,
        \end{tikzcd}
        \]
        where $J$ is an infinite collection of primes, and $K := \{p_1, p_2, p_3, \dots\}$ is an infinite subset of $J$, so that we may define $K_n := \{p_i\}_{i = 1}^n$ and $J_n := (J \setminus K) \cup K_n$, and $\{e_p\}_{p \in K}$ is a family of positive exponents.
	\end{enumerate}

    Understanding the structure of their spectra is central to our study.
    In the final section, we show that any subterminal scheme must be built from the spectra of solid rings.
    This allows us to establish necessary conditions on the points and local rings of subterminal schemes, analyze their topology, and determine their possible global structures.
    Our results provide a complete classification that characterizes all subterminal schemes.

    \section{Spectra of solid rings}
    \noindent

    In this section, we establish that affine schemes arising from solid rings remain subterminal within the broader category of schemes.
    Additionally, we examine the structure of their spectra.\\

    We begin with a general lemma that we then specialize to affine schemes.

    \begin{lemma}
    Let $S$ be a scheme such that for every affine scheme $\Spec(A)$, there exists at most one morphism $\Spec(A) \to S$.
    Then $S$ is a subterminal object in the category of schemes.
    \end{lemma}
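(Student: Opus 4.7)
The plan is to reduce an arbitrary pair of morphisms $f,g : X \to S$ to the affine case by covering $X$ with affine opens, and then invoke the sheaf property of $\operatorname{Hom}(-,S)$ in the Zariski topology.

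More concretely, given a scheme $X$ and two morphisms $f,g : X \to S$, I would choose an affine open cover $X = \bigcup_{i \in I} U_i$ with each $U_i = \Spec(A_i)$. The restrictions $f|_{U_i}$ and $g|_{U_i}$ are both morphisms from an affine scheme to $S$, so by hypothesis they coincide: $f|_{U_i} = g|_{U_i}$ for every $i \in I$. Since the morphisms $f$ and $g$ agree on every member of an open cover of $X$, the standard gluing principle for morphisms of locally ringed spaces (equivalently, the fact that $\operatorname{Hom}_{\mathrm{Sch}}(-,S)$ is a Zariski sheaf) forces $f = g$ on all of $X$. This shows $S$ is subterminal.

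There is essentially no obstacle here; the only point that requires any care is the gluing step, which is routine and can be stated cleanly by noting that a morphism of schemes is determined by the underlying continuous map and the sheaf map on structure sheaves, both of which are local data on the source. I would simply cite or briefly invoke this standard fact rather than reproving it. One could alternatively phrase the argument by saying that every scheme is a colimit of its affine opens in the category of schemes, and a cocone into $S$ is unique as soon as each of its legs from an affine scheme is; but the open-cover formulation is more elementary and fits naturally with the sheaf-theoretic flavor of the later sections.
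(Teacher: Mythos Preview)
Your proof is correct and follows essentially the same approach as the paper: cover $X$ by affine opens and use that morphisms of schemes are determined locally. Your phrasing---starting with two morphisms $f,g$ and showing they agree on each affine open---is arguably cleaner than the paper's, which speaks of patching local morphisms, but the underlying argument is identical.
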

    \begin{proof}
    To prove that $S$ is subterminal, we must show that for every scheme $X$, there exists at most one morphism $X \to S$.

    Given any scheme $X$, consider an open affine cover $\{\Spec(A_i)\}_{i \in I}$ of $X$.
    By assumption, for each $i$, there is at most one morphism $\Spec(A_i) \to S$.
    If such morphisms exist, they must agree on overlaps, since morphisms of schemes are determined by their restrictions to open affine subschemes.
    This ensures that the local morphisms patch together uniquely to define at most one morphism $X \to S$.

    Thus, $S$ is subterminal in the category of schemes.
    \end{proof}

    \begin{lemma}
        An affine scheme is subterminal if and only if it is the spectrum of a solid ring.
    \end{lemma}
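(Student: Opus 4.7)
The plan is to use the contravariant equivalence between affine schemes and commutative rings to translate solidity of $R$ into the hypothesis of the previous lemma, and then to invoke that lemma directly. The core dictionary is: a ring $R$ is solid exactly when $\mathrm{Hom}_{\mathrm{Ring}}(R, A)$ has at most one element for every commutative ring $A$, which via $\Spec$ is the statement that there is at most one scheme morphism $\Spec(A) \to \Spec(R)$ for every affine scheme $\Spec(A)$.

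For the ``if'' direction, I assume $R$ is solid. The translated condition above is then exactly the hypothesis of the preceding lemma applied to $S = \Spec(R)$, so that lemma immediately concludes that $\Spec(R)$ is subterminal in the category of all schemes.

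For the ``only if'' direction, I assume $\Spec(R)$ is subterminal in schemes. Restricting the subterminality quantifier to affine test objects yields at most one morphism $\Spec(A) \to \Spec(R)$ for every commutative ring $A$, which under the duality translates back to at most one ring homomorphism $R \to A$. Hence $R$ is solid, and by the classification recalled in the introduction it is one of the four types listed there.

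There is essentially no substantive obstacle: the only nontrivial reduction, from arbitrary test schemes to affine ones, has already been carried out in the preceding lemma, and what remains is a routine unwinding of definitions through the anti-equivalence $\Spec$.
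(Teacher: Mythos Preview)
Your proof is correct and follows essentially the same approach as the paper: both directions rest on the anti-equivalence between affine schemes and commutative rings, with the ``if'' direction reduced to the preceding lemma and the ``only if'' direction obtained by restricting the test objects to affine schemes. Your write-up is in fact more explicit than the paper's, which compresses both directions into a single sentence about the full embedding; the final remark about the four types is unnecessary for the lemma as stated but does no harm.
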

    \begin{proof}
        Since the category of affine schemes fully embeds into the category of schemes, any morphism between affine schemes is determined within the category of affine schemes.
        It follows that the only subterminal affine schemes are those corresponding to solid rings.
    \end{proof}

    We now analyze the structure of the spectrum of a solid ring.

    \begin{lemma}
    The localization of a solid ring at a multiplicative subset is solid.
    \end{lemma}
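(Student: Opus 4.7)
The plan is to give a short categorical proof that uses only the definition of \emph{solid} together with the universal property of localization, without any appeal to the explicit classification into four types. Let $R$ be a solid ring and let $S \subseteq R$ be a multiplicative subset. To verify that $S^{-1}R$ is solid, I need to show that for every ring $T$ there is at most one ring homomorphism $S^{-1}R \to T$.

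The key step is to identify $\mathrm{Hom}(S^{-1}R, T)$ with a subset of $\mathrm{Hom}(R, T)$. By the universal property of localization, postcomposition with the canonical map $R \to S^{-1}R$ gives a bijection between ring homomorphisms $S^{-1}R \to T$ and those ring homomorphisms $R \to T$ under which every element of $S$ is sent to a unit of $T$. In particular, $\mathrm{Hom}(S^{-1}R, T)$ injects into $\mathrm{Hom}(R, T)$.

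Since $R$ is solid, $\mathrm{Hom}(R, T)$ has at most one element, so the subset corresponding to $\mathrm{Hom}(S^{-1}R, T)$ has at most one element as well. As $T$ was arbitrary, this shows that $S^{-1}R$ admits at most one morphism to any ring, i.e., is solid. There is no real obstacle here: the argument is essentially a one-line observation that ``being solid'' is a property about the \emph{size} of Hom-sets, which can only shrink when one passes from $R$ to a localization of $R$. The only ingredient used beyond the definition is the universal property of localization.
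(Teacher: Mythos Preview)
Your proof is correct and is essentially identical to the paper's own argument: both use the universal property of localization to identify $\mathrm{Hom}(S^{-1}R,T)$ with the subset of $\mathrm{Hom}(R,T)$ consisting of maps sending $S$ to units, and then conclude from solidity of $R$. The only difference is that you phrase the step slightly more categorically (as an injection of Hom-sets), while the paper states it in one sentence.
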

    \begin{proof}
    Let $R$ be a solid ring, let $S \subseteq R$ be a multiplicative subset and let $R_S := S^{-1}R$, the localization of $R$ away from $S$.
    
    A morphism from $R_S \to A$ is equivalent to a morphism $R \to A$ sending the elements of $S$ to invertible elements of $A$.
    Thus, since there is at most one morphism $R \to A$ there is at most one morphism $R_S \to A$.    
    \end{proof}

    Since we have a full classification of solid rings, we can explicitly describe their prime ideals and their localizations.
    To conclude, we provide a detailed analysis of each type of solid ring, focusing on their topological properties and stalks at each point.

    \subsection{Solid rings of type (1)}
    For a cyclic ring $\Z/n\Z$, where $n \geq 1$ is a positive integer, we can factor $n$ into its prime components as
    $$
    \Z/n\Z \cong \prod\limits_{p \mid n} \Z/p^{v_p(n)}\Z.
    $$
    It follows immediately that $\Spec(\Z/n\Z)$ is a finite discrete space, with one point for each prime divisor $p$ of $n$.
    Moreover, the stalk at the point associated with $p$ is the ring $\Z/p^{v_p(n)}\Z$.

    \[    
    \begin{tikzpicture}
    \node[label=below:$(p_1)$] at (1,0) {\textbullet};
    \node[label=below:$(p_2)$] at (2,0) {\textbullet};
    \node at (3,0) {$\cdots$};
    \node[label=below:$(p_k)$] at (4,0) {\textbullet};
    \end{tikzpicture}
    \]

    \subsection{Solid rings of type (2)}
    For a subring of the rationals $\Z[J^{-1}]$, where $J$ is any set of primes, the spectrum consists of one point for each prime $p \not\in J$, along with a generic point corresponding to the zero ideal, which can be represented as a line.
    The topology is cofinite, meaning that every nonempty open set contains all but finitely many points. Additionally, every nonempty open set contains the generic point associated with $0$.
    The stalk at the point corresponding to $p$ is the localization $\Z_{(p)}$, while the stalk at the generic point is the field of fractions $\Q$.

    \[    
    \begin{tikzpicture}
    \draw (0,0) -- (5,0);
    \node[label=below:$(p_1)$] at (1,0) {\textbullet};
    \node[label=below:$(p_2)$] at (2,0) {\textbullet};
    \node[label=below:$(p_3)$] at (3,0) {\textbullet};
    \node[label=below:$\cdots$] at (4,0) {\textbullet};
    \end{tikzpicture}
    \]

    \subsection{Solid rings of type (3)}
    A product ring $\Z[J^{-1}] \times \Z/n\Z$, where $J$ is a set of primes containing all prime factors of $n \geq 2$, is the product of a solid ring of type (1) with a solid ring of type (2).
    Consequently, its spectrum is the disjoint union of the spectra of its factors, fully determining the associated scheme.

    Notice how the points corresponding to primes dividing $n = \prod\limits_{i = 1}^l q_i^{v_{q_i}(n)}$, since they must belong to $J$, do not appear on the line in the pictorial representation, i.e., each prime appears at most once in the drawing.

    \[    
    \begin{tikzpicture}
    \draw (0,0) -- (8,0);
    \node[label=below:$(p_1)$] at (1,0) {\textbullet};
    \node[label=above:$(q_1)$] at (2,1) {\textbullet};
    \node[label=below:$\cdots$] at (3,0) {\textbullet};
    \node[label=above:$\cdots$] at (4,1) {\textbullet};
    \node[label=below:$(p_k)$] at (5,0) {\textbullet};
    \node[label=above:$(q_l)$] at (6,1) {\textbullet};
    \node[label=below:$\cdots$] at (7,0) {\textbullet};
    \end{tikzpicture}
    \]

    \subsection{Solid rings of type (4)}
    Consider colimits of diagrams of the form
    \[
    \begin{tikzcd}
    {\Z[J_0^{-1}]} \arrow[r, hook] & \cdots \arrow[r, hook] & {\Z[J_n^{-1}] \times \prod\limits_{q \in K_n} \Z/q^{e_q}\Z} \arrow[r, hook] & \cdots,
    \end{tikzcd}
    \]
    where $J$ is an infinite set of primes, and $K := \{q_1, q_2, q_3, \dots\}$ is an infinite subset of $J$.
    We define $K_n := \{q_i\}_{i = 1}^n$ and $J_n := (J \setminus K) \cup K_n$, with $\{e_q\}_{q \in K}$ denoting an indexed family of positive exponents.
    The affine spectrum of the colimit ring can be determined by taking the corresponding limit in the opposite category of affine schemes.

    \[    
    \begin{tikzpicture}
    \draw (0,0) -- (9,0);
    \node[label=below:$(p_1)$] at (1,0) {\textbullet};
    \node[label=above:$(q_1)$] at (2,1) {\textbullet};
    \node[label=below:$\cdots$] at (3,0) {\textbullet};
    \node[label=above:$\cdots$] at (4,1) {\textbullet};
    \node[label=below:$(p_k)$] at (5,0) {\textbullet};
    \node[label=above:$(q_l)$] at (6,1) {\textbullet};
    \node[label=below:$\cdots$] at (7,0) {\textbullet};
    \node[label=above:$\cdots$] at (8,1) {\textbullet};
    \end{tikzpicture}
    \]

    The topology is given as follows:
    \begin{enumerate}
        \item Open sets containing the generic point $0$ are cofinite.
        \item Open sets that do not contain $0$ are arbitrary subsets of points associated with primes in $K$.
    \end{enumerate}

    The stalks at the points are as follows:
    \begin{enumerate}
        \item For $p \not\in J$, the stalk is $\Z_{(p)}$.
        \item For $q \in K$, the stalk is $\Z/q^{e_q}\Z$.
        \item At the generic point $0$, the stalk is the field of fractions $\Q$.
    \end{enumerate}

    \section{All subterminal schemes}
    \noindent

    In the previous section, we established that the only subterminal affine schemes are the spectra of solid rings.
    The natural question that follows is whether there exist additional subterminal schemes beyond these.
    In this section, we classify all subterminal schemes.

    We proceed as follows:
    \begin{enumerate}
        \item We determine necessary conditions on the set of points and their respective stalks for a scheme to be subterminal.

        \item We analyze the possible topologies that can be imposed on schemes satisfying these point and stalk conditions established in the step before.

        \item We provide a complete classification of subterminal schemes in a unified framework.
    \end{enumerate}
   
    \subsection{Point structure and stalks}
    
    We begin by identifying the constraints on the set of points and their stalks for a scheme to be subterminal.
    A key observation is that every open subscheme of a subterminal scheme must itself be subterminal.
    Applying this to open affine subschemes and recalling that every affine subterminal scheme is the spectrum of a solid ring, as established in the previous section, we obtain the following result:
    
    \begin{lemma}
        Every open affine subscheme of a subterminal scheme is the spectrum of a solid ring.
    \end{lemma}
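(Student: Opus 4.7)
The plan is to show that any open subscheme of a subterminal scheme is itself subterminal, and then invoke the previously established characterization of subterminal affine schemes as spectra of solid rings.

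First I would fix an open affine subscheme $U = \Spec(A)$ of a subterminal scheme $X$, together with the canonical open immersion $\iota \colon U \hookrightarrow X$. Given any scheme $Y$ and any two morphisms $f, g \colon Y \to U$, I would compose both with $\iota$ to obtain two morphisms $\iota \circ f, \iota \circ g \colon Y \to X$. Since $X$ is subterminal, these two composites must agree. The key observation is then that open immersions are monomorphisms in the category of schemes, so cancelling $\iota$ gives $f = g$. This proves that $U$ is subterminal in the category of schemes.

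Once $U$ is known to be subterminal as a scheme, it is in particular subterminal when viewed in the subcategory of affine schemes, since the restriction property holds for any subcategory. By the lemma already proved in Section 2 stating that an affine scheme is subterminal if and only if it is the spectrum of a solid ring, we conclude that $A$ is a solid ring, as required.

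The only point that requires any care is the fact that open immersions are monomorphisms, but this is a standard property of schemes (an open immersion $\iota \colon U \hookrightarrow X$ is the inclusion of a subobject, and at the level of points and sheaves the data of a morphism $Y \to X$ that factors through $U$ is uniquely determined by the underlying map $Y \to U$). Given that this is well known, I do not expect any serious obstacle; the proof is essentially a one-line categorical argument combined with the classification result for affine subterminal schemes.
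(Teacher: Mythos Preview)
Your proposal is correct and follows essentially the same approach as the paper: the paper states (without explicit proof) that every open subscheme of a subterminal scheme is itself subterminal, and then invokes the Section~2 characterization of subterminal affine schemes. You have simply spelled out the monomorphism argument that justifies the first step, which the paper leaves implicit.
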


    Since stalks are determined locally, this lemma allows us to characterize the possible stalks at points of a subterminal scheme.
    As established in the previous section, the stalk at any point in the spectrum of a solid ring is one of the following: $\Q$, $\Z_{(p)}$ for some prime $p$, or $\Z/p^n\Z$ for some prime $p$ and positive integer $n$.
    These remain the only possible stalks in a general subterminal scheme.
    
    Moreover, we can make two additional observations:

    \begin{enumerate}
        \item
        For any prime $p$, there can be at most one point whose stalk belongs to the set $\{\Z_{(p)}\} \cup \{\Z/p^n\Z\}_{n \in \Z_+}$. Otherwise, there would exist distinct inclusions from the single-point scheme $\Spec(\Z/p\Z)$, induced by the canonical morphisms $\Z_{(p)} \to \Z/p\Z$ or $\Z/p^n\Z \to \Z/p\Z$, violating the uniqueness condition required for subterminal schemes.

        \item 
        By an analogous reasoning, there can be at most one point whose stalk is $\Q$.
        Furthermore, if a scheme contains a point whose stalk is $\Z_{(p)}$ for some prime $p$, then it must also contain a point with stalk $\Q$.
    \end{enumerate}

    Thus, as a set of points and their respective stalks, we have completely determined the possible configurations of points in a subterminal scheme.
    Even more, given a scheme with such configuration of points and their respective stalks, the scheme is subterminal:
    
    \begin{lemma}
        Let $R$ be a scheme satisfying the conditions established above.
        Then $R$ is subterminal.
    \end{lemma}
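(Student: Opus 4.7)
The plan is to show that for any scheme $X$ and any two morphisms $f, g : X \to R$, one has $f = g$. I will first match $f$ and $g$ as continuous maps on the underlying spaces, then observe that the induced stalk maps agree, and finally upgrade this to equality of the global sheaf morphisms via the separation axiom for $\mathcal{O}_X$.

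For the point-level comparison, I would fix $x \in X$ and set $A := \mathcal{O}_{X,x}$. The morphism $f$ furnishes a local ring homomorphism $f_x^\# : \mathcal{O}_{R, f(x)} \to A$, so $f(x)$ must be a point $y \in R$ whose stalk admits a local map into the nonzero local ring $A$. I would show that $y$ is determined by $A$ alone. By hypothesis, each stalk of $R$ lies in $\{\Q\} \cup \{\Z_{(p)}\}_p \cup \{\Z/p^n\Z\}_{p,n}$, each such stalk type appears at most once in $R$, and for each prime $p$ the types $\Z_{(p)}$ and $\Z/p^n\Z$ do not both occur. A case split on the residue characteristic of $A$ pins down $y$: if $\chara(A/\mathfrak{m}_A) = 0$, every prime is a unit in $A$, so only $\Q$ can be the stalk of $y$ and at most one such point exists in $R$; if $\chara(A/\mathfrak{m}_A) = p$, then only stalks of the form $\Z_{(p)}$ or $\Z/p^n\Z$ can admit a local map into $A$, and by hypothesis $R$ contains at most one point with such a stalk. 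Hence $f(x) = g(x)$.

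With the underlying maps agreeing, $f^\#$ and $g^\#$ become two morphisms of sheaves $\mathcal{O}_R \to f_* \mathcal{O}_X$. Each stalk $\mathcal{O}_{R, f(x)}$ is solid, being one of $\Q$, $\Z_{(p)}$, $\Z/p^n\Z$, so the ring map $\mathcal{O}_{R, f(x)} \to A$ is unique, whence $f_x^\# = g_x^\#$ for every $x$. For any open $U \subseteq R$ and any $s \in \mathcal{O}_R(U)$, the germs of $f^\#(U)(s)$ and $g^\#(U)(s)$ at each $x \in f^{-1}(U)$ both equal $f_x^\#(s_{f(x)})$; by the separation axiom for the sheaf $\mathcal{O}_X$ the two sections are then equal. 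Thus $f^\# = g^\#$ and $f = g$. The principal difficulty will be the case analysis in the previous paragraph—verifying that the stalk/point restrictions on $R$ leave at most one possible image $y$ for each $x$—while everything else follows formally from the solidity of the permitted stalks together with the sheaf axiom.
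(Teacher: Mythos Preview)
Your proof is correct and follows essentially the same approach as the paper: both pin down the image of each point $x\in X$ by analyzing which of the permitted stalks admit a local map into $\mathcal{O}_{X,x}$ (the paper phrases this via the preimage of $\mathfrak{m}_A$ under $\Z\to A$, you via the residue characteristic of $A$, which is equivalent), and then conclude by uniqueness of the stalk maps since each $\mathcal{O}_{R,y}$ is a solid ring. Your final step, invoking the separation axiom for $\mathcal{O}_X$ to pass from stalkwise agreement to $f^\# = g^\#$, is in fact more explicit than the paper's corresponding sentence.
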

    \begin{proof}
        Let $S$ be another scheme.
        We show that there can be at most one morphism $S \to R$.\\
        
        At the level of points, there is at most one possible choice for each point $a \in S$, whose stalk is the local ring $A$.
        We now verify that two distinct choices are impossible:
        
        First, consider the family of local rings $\{\Q, \Z_{(2)}, \Z_{(3)}, \dots\}$.
        Any morphism $\Z \to A$ must send the preimage of the maximal ideal $\mathfrak{m} \subseteq A$ to a prime ideal $\mathfrak{p} \subseteq \Z$.
        This forces $A$ to be the localization of $\Z$ at $\mathfrak{p}$, meaning the only possible local morphism from this family is from $\Z_\mathfrak{p}$.
        
        Second, at most one prime $p$ allows a morphism $\Z/p^n\Z \to A$, since such a morphism would imply that $\chara(A)$ is a power of $p$.

        Combining these observations, any point of $S$ mapping to $R$ is uniquely determined: if morphisms from both families exist, they must correspond to the same prime, since $\Z \to A$ would factor through $\Z/p^n\Z$, forcing the prime ideal corresponding to $\mathfrak{m}$ to be $\mathfrak{p} = (p)$.
        Since there is at most one point for each prime, we conclude that the choice of point in $S$ mapping to $R$ is uniquely determined.\\
        
        Finally, since morphisms of schemes are uniquely determined at the level of stalks, and the stalks of $R$ are local rings under the given conditions, there is at most one induced morphism at the level of stalks for each point.
        Thus, there is at most one morphism $S \to R$, proving that $R$ is subterminal.
    \end{proof}

    Hence, the remaining question is to determine the possible topologies that such schemes may admit, which we address in the following subsection.
    
    \subsection{Possible topologies}
    
    Having determined the conditions for the set of points and their stalks, we now examine the possible topologies that such a scheme can carry.
    Interestingly, many of these structures allow multiple valid topologies, meaning different ways of defining the open sets while still forming a scheme.

    A key fact that constrains the possible topologies is that our scheme has an affine open cover by spectra of solid rings.
    This means that every point belongs to at least one affine open subset corresponding to some solid ring.
    Since the topology of a scheme is determined locally by its affine opens, understanding how open sets behave within each affine patch helps us determine the global topology.

    Throughout this discussion, we will refer to a point in our scheme either by the prime corresponding to it or by the local ring associated with it.

    \subsubsection{Open sets excluding $\Q$}
    We first consider open sets that do not contain the generic point $\Q$.
    A fundamental observation is that if an open set excludes $\Q$, it must also exclude any point associated with a ring of the form $\Z_{(p)}$.
    This follows from the fact that, in every affine open subset, which is the spectrum of a solid ring, any open set that contains $\Z_{(p)}$ for some prime $p$ must also contain $\Q$.
    Thus, if an open set does not contain $\Q$, it must avoid all such points.

    The only remaining points are those associated with cyclic rings of the form $\Z/p^n\Z$, where $p$ is a prime and $n$ is a positive integer.
    Moreover, as seen in the previous section, in the spectrum of any solid ring, each point corresponding to $\Z/p^n\Z$ is always open. 

    As a consequence, the open sets that exclude $\Q$ must be arbitrary collections of points corresponding to rings of the form $\Z/p^n\Z$. This means that, within our scheme, the open sets not containing $\Q$ can be freely chosen among these points without additional constraints.

    \subsubsection{Open sets containing $\Q$}
    Now, let us consider the structure of open sets that contain $\Q$.
    The key is that if we take two affine open sets containing $\Q$, then their symmetric difference must be finite.
    Using this fact we are able to express our subterminal scheme as a colimit of inclusions of the spectra of solid rings, where at each step we add a single point, providing our underlying set with a topology.

    \begin{lemma}
        Let $S$ be a subterminal scheme.
        Let $\Spec(A) \hookrightarrow S$ and $\Spec(B) \hookrightarrow S$ be two affine open subsets corresponding to solid rings of type (2), (3), or (4), i.e., affine open subsets that contain $\Q$.
        Then the symmetric difference
        $$
        \Spec(A) \triangle \Spec(B) := (\Spec(A) \setminus \Spec(B)) \cup (\Spec(B) \setminus \Spec(A))
        $$
        is finite.
    \end{lemma}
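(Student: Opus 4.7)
The plan is to reduce the claim to finiteness of each half of the symmetric difference separately, and then to extract this finiteness from the explicit topological description of spectra of solid rings of types (2), (3), and (4) given in the previous section. The key input to recall is that in each of these three cases the spectrum has a unique point with stalk $\Q$, and every open subset of the spectrum containing that point has finite complement.

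First I would note that $U := \Spec(A) \cap \Spec(B)$ is an open subset of both $\Spec(A)$ and $\Spec(B)$, since each is open in $S$. By hypothesis both affine opens contain a point with stalk $\Q$; by the uniqueness of such a point in $S$ (established in the previous subsection on point structure and stalks), these two points coincide and lie in $U$. Thus $U$ is an open neighbourhood of the generic point in each of $\Spec(A)$ and $\Spec(B)$.

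Next I would invoke the topological classification of each type. For type (2) every nonempty open set is cofinite by construction. For type (3) the spectrum splits as a disjoint union of a type (2) piece with the cofinite topology and a finite discrete type (1) piece, so an open set containing $\Q$ has finite complement in each component, hence finite complement overall. For type (4) this is precisely the defining condition (1) on its topology. Applying this to $\Spec(A)$ and $\Spec(B)$ gives that $\Spec(A) \setminus U = \Spec(A) \setminus \Spec(B)$ and $\Spec(B) \setminus U = \Spec(B) \setminus \Spec(A)$ are both finite, and their union is the symmetric difference.

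I do not anticipate any real obstacle; the argument is essentially bookkeeping once one has the topological descriptions of types (2), (3), (4) in hand. The only point that warrants a line of justification is the type (3) case, where one must check that the extra finite discrete component cannot contribute infinitely many excluded points, but finiteness of that component settles the matter at once.
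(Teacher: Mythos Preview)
Your proof is correct and follows essentially the same approach as the paper: take the intersection $U = \Spec(A)\cap\Spec(B)$, observe it is open in each factor, and use the cofiniteness of opens containing $\Q$ in types (2), (3), (4) to conclude both halves of the symmetric difference are finite. If anything you are slightly more careful than the paper, since you explicitly justify (via uniqueness of the $\Q$-point in $S$) that $U$ actually contains $\Q$, which the paper's proof uses implicitly.
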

    \begin{proof}
    Since $\Spec(A)$ and $\Spec(B)$ correspond to solid rings of type (2), (3), or (4), their topologies satisfy the property that any open set containing $\Q$ has a cofinite complement, that is, only finitely many points in the affine scheme are missing.

    Now, consider the intersection $\Spec(A) \cap \Spec(B)$.
    This set is open in both $\Spec(A)$ and $\Spec(B)$, so its complement within each must be finite.
    That is, both $\Spec(A) \setminus \Spec(B)$ and $\Spec(B) \setminus \Spec(A)$ are finite.

    Since the symmetric difference is precisely the union of these two finite sets, it follows that $\Spec(A) \triangle \Spec(B)$ is finite, as required.
    \end{proof}

    Using the previous lemma, we now establish a procedure to cover our scheme with a nested sequence of affine open subschemes, each included in the next.
    This construction allows us to express our scheme as a colimit of inclusions of spectra of solid rings.

    We start by taking an open affine cover of our scheme.
    Since the set of points in our underlying topological space is countable, we can refine this cover to a countable one.
    We will primarily focus on the open affines that contain $\Q$, i.e. those corresponding to spectra of solid rings of type (2), (3), and (4).

    By the previous lemma, the symmetric difference between any two affine open subsets containing $\Q$ is finite.
    This property allows us to glue two such open sets into a larger affine open set that contains both.
    Explicitly, if we have two such affine open sets
    $$
    \Spec(A) := \Spec\left(\Z[I^{-1}] \times \prod\limits_{p \in K} \Z/p^{e_p}\Z\right)
    $$
    and
    $$
    \Spec(B) := \Spec\left(\Z[J^{-1}] \times \prod\limits_{p \in L} \Z/p^{e_p}\Z\right),
    $$
    then their union can be covered by
    $$
    \Spec\left(\Z[(I \cap J)^{-1}] \times \prod\limits_{p \in (K \cup L)} \Z/p^{e_p}\Z\right).
    $$
    This new affine scheme incorporates to $\Spec(A)$ all points corresponding to $\Z_{(p)}$ for $p \in I \cap (\P \setminus J)$ and all points corresponding to $\Z/p^{e_p}\Z$ for $p \in L \setminus K$.
    Since the symmetric difference is finite, we can refine our cover further by adding just one point at a time, forming a finite sequence of inclusions in between
    $$
    \Spec(A) \hookrightarrow \Spec(A_1) \hookrightarrow \cdots \hookrightarrow \Spec(A_n).
    $$

    We can do this process step by step with all the the open affines that contain $\Q$, and, furthermore, we interleave this process with the addition of points that are covered only by spectra of solid rings of type (1), ensuring that the entire space is covered.

    After all this process takes place we are left we obtain a countable sequence of spectra of solid rings, each contained in the next, where each step increases the set of points by one.
    This nested structure allows us to express our subterminal scheme as a colimit of affine schemes.
    The resulting scheme has a topology in which open sets containing $\Q$ are cofinite for a certain initial set of points, meaning that open sets must contain all but finitely many points of the initial collection of points.

    This process can always be carried, giving us a corresponding initial collection of points for each arrangement of points and their stalks as in the previous subsection.
    Furthermore, any data of such an initial collection of points of our arrangement of points and their stalks gives an open cover of our data by spectra of solid rings, i.e., a scheme.
    
    Finally, it is straightforward that any two choices of such an initial collection of points give equivalent topological spaces if and only if their symmetric difference is finite.
    
    We have completed the classification of topologies for subterminal schemes.

    \subsection{The Full Classification of Subterminal Schemes}

    Finally, we synthesize our findings into a concise classification of subterminal schemes, fully describing both their point structure and topological properties.

    A subterminal scheme is given by the following data:
    \begin{enumerate}
        \item
        An exponent function $e: \P \to \{0, 1, 2, \dots\} \cup \{+\infty\}$.
        \item 
        A binary parameter $q \in \{0,1\}$, where $q = 1$ if there exists a prime $p$ with $e(p) = +\infty$.
        \item 
        An equivalence class $C$ of subsets of primes
        $$
        E := \{p \in \P : e(p) \geq 1\},
        $$
        under the equivalence relation where two subsets are equivalent if and only if their symmetric difference is finite.
    \end{enumerate}

    These data uniquely determine a subterminal scheme, which can be expressed as a colimit of spectra of solid rings, with the following structure:

    \begin{itemize}
        \item 
        The generic point $\Q$, present if and only if $q = 1$.
    
        \item 
        A point for each prime $p \in \P$, where:
        \begin{itemize}
            \item[$\bullet$]
            $e(p) = 0$, corresponds to the zero ring, hence no point.

            \item[$\bullet$]
            $e(p)$ positive integer, corresponds to a points whose stalk is the cyclic ring $\Z/p^{e(p)}\Z$.
    
            \item[$\bullet$]
            $e(p) = +\infty$, corresponds to a points whose stalk is $\Z_{(p)}$.
        \end{itemize}
    \end{itemize}
    \begin{itemize}
        \item 
        Open sets not containing $\Q$ are arbitrary unions the points corresponding to points in
        $$
        E_< := \{p \in \P : e(p) < +\infty\}.
        $$
    
        \item 
        Open sets containing $\Q$ are those whose intersection with $C$ is cofinite in $C$, i.e., such that the intersection of their complementary with $C$ is finite.
    \end{itemize}

    This classification fully characterizes all subterminal schemes.
	\printbibliography

\end{document}